\newtheorem{theorem}{Theorem}[section]
\newtheorem{lemma}[theorem]{Lemma}
\theoremstyle{definition}
\newtheorem{example}[theorem]{Example}
\newtheorem{definition}[theorem]{Definition}
\begin{document}

\title{Leavitt path algebras satisfying a polynomial identity}

\author{Jason Bell}
\address{Jason Bell\\
University of Waterloo\\
Department of Pure Mathematics\\
200 University Avenue West\\
Waterloo, Ontario \  N2L 3G1\\
Canada}
\email{jpbell@uwaterloo.ca}

\author{T. H. Lenagan}
\address{T. H. Lenagan, Maxwell Institute for Mathematical Sciences, 
School of Mathematics, University of Edinburgh, 
James Clerk Maxwell Building, King's Buildings, 
Mayfield Road, Edinburgh EH9 3JZ, Scotland, UK\\
}\email{tom@maths.ed.ac.uk}

\author{Kulumani M. Rangaswamy}
\address{Kulumani M. Rangaswamy\\
University of Colorado, Colorado Springs, Colorado 80919, USA\\
}
\email{krangasw@uccs.edu}

\thanks{The first author acknowledges support of NSERC grant 31-611456. 
The second author acknowledges support of EPSRC grant EP/K035827/1.}

\begin{abstract}
Leavitt path algebras $L$ of an arbitrary graph $E$ over a field $K$
satisfying a polynomial identity are completely characterized both in
graph-theoretic and algebraic terms. When $E$ is a finite graph, $L$
satisfying a polynomial identity is shown to be equivalent to the
Gelfand-Kirillov dimension of $L$ being at most one, though this is no longer
true for infinite graphs. It is shown that, for an arbitrary graph $E$, the
Leavitt path algebra $L$ has Gelfand-Kirillov dimension zero if and only if
$E$ has no cycles. Likewise, $L$ has Gelfand-Kirillov dimension one if and
only if $E$ contains at least one cycle, but no cycle in $E$ has an exit.

\end{abstract}

\maketitle


\section{Introduction}

Leavitt path algebras were introduced in \cite{AA, AMP} as algebraic
analogues of graph C*-algebras and as natural generalizations of Leavitt
algebras of type $(1,n)$ constructed by Leavitt \cite{L}. The various ring-theoretical
properties of these algebras have been actively investigated in a series of
papers, see, for example, \cite{AA, AAPS, ABR, AR, AMP}.\vskip 1ex

It is straightforward to show that a Leavitt path algebra $L$ of a connected
graph $E$ over a field $K$ (see Section~\ref{section-defs} for the relevant
definitions) is commutative if and only if the graph $E$ is either a single
vertex or consists of a single vertex $v$ and an edge $e$ which is a loop at
$v$; namely, the edge $e$ begins and ends at $v$. In this case, $L$ is
isomorphic to $K$ or to the Laurent polynomial ring $K[x,x^{-1}]$.
Observing that commutative algebras satisfy the polynomial identity $xy-yx=0$,
it is natural to ask under which conditions a Leavitt path algebra satisfies a
polynomial identity. In this paper, we obtain a complete characterization of
Leavitt path algebras satisfying a polynomial identity in both algebraic and
graph-theoretic terms (Theorem~\ref{General PI LPA}). In graph-theoretic
terms, we show that the Leavitt path algebra $L$ of an arbitrary graph $E$
over the field $K$ satisfies a polynomial identity if and only if no cycle in
$E$ has an exit and there is a fixed positive integer $d$ such that for every
vertex $v\in E$, the number of distinct paths ending at $v$ having no repeated
vertices is at most $d$. In this case, we show that $L$ is a subdirect product
of matrix rings of order $\leq d$ over $K$ and $K[x,x^{-1}]$. \vskip 1ex

If, in addition, $E$ is row-finite (that is,  each edge of 
$E$ emits only finitely many edges), we get a stronger conclusion
for $L$: the Leavitt path algebra $L$ is isomorphic to a (possibly infinite) direct sum of matrix rings 
either over $K$ or  $K[x,x^{-1}]$, where the order of each
matrix ring in this decomposition is less than a fixed positive integer $d$ 
and is specified in terms of graph-theoretic data 
(Theorem~\ref{Row finite PI LPA}). \vskip 1ex

Specialising further, 
when $E$
is a finite graph we obtain several equivalent characterizing properties for
$L$ to be a PI algebra (Theorem \ref{GK-Finite graph}) including the property
that the Gelfand-Kirillov dimension (GK dimension, for short) of $L$ is
at most $1$.  \vskip 1ex

In general, if $E$ is an infinite graph with the property that its associated
Leavitt path algebra $L$ is PI then $L$ must have GK dimension at most $1$. We
give examples, however, that show that there exist Leavitt path algebras
having GK dimension $\leq1$ which are not PI algebras. We then consider the
larger class of Leavitt path algebras having low GK dimension. For instance, a
Leavitt path algebra $L$ of a graph $E$ has GK dimension $0$ if and only if
$L$ is von Neumann regular, equivalently, $E$ has no cycles. Likewise, $L$
will have GK dimension $1$ if and only if $E$ contains at least one cycle and
no cycle in $E$ has an exit. In this case, $L$ is a directed union of finite
direct sums of matrix rings of finite order over $K$ and $K[x,x^{-1}]$.



\section{Background and definitions}\label{section-defs}
Here, we give some of the background needed for the paper along with an
overview of some of the earlier work on this subject. Unless otherwise stated,
all the graphs that we consider are arbitrary in the sense that no
restriction is placed either on the number of vertices or on the number of
edges emitted by any single vertex. Generally, we follow the notation and
terminology for Leavitt path algebras that appears in \cite{AA, AMP}. We
give below a short outline of some of the basic concepts and results that we
need.

\begin{definition} A (directed) \emph{graph} $E=(E^{0},E^{1},r,s)$ consists of two sets $E^{0}$ and
$E^{1}$ together with maps $r,s:E^{1}\rightarrow E^{0}$. The elements of
$E^{0}$ are called the \textit{vertices} of $E$ and the elements of $E^{1}$
are called the \textit{edges} of $E$. For each edge $e\in E^1$, there are
vertices $s(e),r(e)\in E^0$, not necessarily distinct, such that $e$ begins at
$s(e)$ and ends at $r(e)$. The element $r(e)$ is called the \emph{range} of
$e$ and the element $s(e)$ is called the \emph{source} of $e$.
\end{definition}

We now give some graph-theoretic terminology that will be useful. A vertex $v$
is called a \textit{sink} if it emits no edges (that is, $s^{-1}(v)$ is empty)
and is called an \textit{infinite emitter} if it emits infinitely many edges
(that is, $\#s^{-1}(v)=\infty$). 
A vertex that is neither a sink nor an infinite emitter is called a 
\textit{regular vertex}; that is, the regular vertices are the vertices 
which emit a nonzero finite number of edges. 
A\textit{ path} $\mu$ of length $n>0$ is a
finite sequence of edges $\mu=e_{1}e_{2} \cdots e_{n}$ with
$r(e_{i})=s(e_{i+1})$ for all $i=1,\ldots,n-1$. 
For such a path, we set $s(\mu):=s(e_1)$ and $r(\mu):=r(e_{n})$. 
We consider a vertex to be a path of length $0$. The set of all
vertices on the path $\mu$ is denoted by $\mu^{0}$. \vskip 1ex

A path $\mu=e_{1}\dots e_{n}$ in $E$ is \textit{closed} if $r(e_{n}%
)=s(e_{1})$, in which case $\mu$ is said to be based at the vertex
$v=s(e_{1})$. A closed path $\mu$ as above is called \textit{simple} provided
it does not pass through its base more than once; that is, $s(e_{i})\neq
s(e_{1})$ for all $i=2,\ldots,n$. The closed path $\mu$ is called a
\textit{cycle} if it does not pass through any of its vertices twice; that is,
if $s(e_{i})\neq s(e_{j})$ for every $i\neq j$. A cycle $\mu=e_{1}e_{2}%
\cdots e_{n}$ is said to have an \textit{exit} at the vertex
$v=s(e_{1})$, if there is an edge $f\neq e_{1}$ such that $s(f)=v=s(e_{1})$.
\vskip 1ex

We put a binary relation $\geq$ on the set of vertices as follows. If there is
a path from vertex $u$ to a vertex $v$, we write $u\geq v$. A subset $D$ of
vertices is said to be \textit{downward directed} if for any $u,v\in D$,
there exists a $w\in D$ such that $u\geq w$ and $v\geq w$. A subset $H$ of
$E^{0}$ is called \textit{hereditary} if, whenever $v\in H$ and $w\in E^{0}$
satisfy $v\geq w$, then $w\in H$. A hereditary set $H$ is \textit{saturated}
if $r(s^{-1}(v))\subseteq H$ implies $v\in H$, for any regular 
vertex $v\in E^0$.
\vskip 1ex

Intuitively, if there is an edge $e$ with source $v$ and range $w$ then we can
think of $w$ as being an immediate descendant of $v$. The relation $u\ge u'$
just means that $u'$ is a descendant of $u$ and the graph being downward
directed means that every pair of vertices share a common descendant. A subset
being hereditary then means that if an element is in the set then so are all
its descendants and being saturated is the same as saying that 
if all of the immediate descendants of a regular vertex $v$ are in the set
then $v$ is necessarily in the set. \vskip 1ex

\begin{definition}\label{definition-lpa} 
Given an arbitrary graph $E$ and a field $K$, the \textit{Leavitt path algebra}, $L_{K}(E)$,
is defined as follows.  For each $e\in E^{1}$, we create a corresponding \emph{ghost edge}, which we denote $e^{\ast}$. We then define
$r(e^{\ast}):= s(e)$, and $s(e^{\ast}):=r(e)$.  With these data, we then define the Leavitt path algebra on $E$ to be the $K$-algebra generated by a set $\{v:v\in
E^{0}\}$ of pairwise orthogonal idempotents together with a set of variables
$\{e,e^{\ast}:e\in E^{1}\}$ which satisfy the following conditions:
\begin{enumerate}
\item[(1)] $s(e)e=e=er(e)$ for all $e\in E^{1}$;
\item[(2)] $r(e)e^{\ast}=e^{\ast}=e^{\ast}s(e)$\ for all $e\in E^{1}$;
\item[(3)] (The ``CK-1 relations'') For all $e,f\in E^{1}$, $e^{\ast}e=r(e)$ and
$e^{\ast}f=0$ if $e\neq f$;
\item[(4)] (The ``CK-2 relations'') For every regular 
vertex $v\in E^{0}$, we have
$v=\sum_{e\in E^{1},s(e)=v}ee^{\ast}.$
\end{enumerate}
\end{definition} 

We note that CK stands for Cunz-Krieger, who studied these relations in 
the context of graph $C^*$-algebras.  \vskip 1ex

Given a path $\mu=e_{1}e_{2} \cdots e_{n}$ in the graph $E$, we refer to
$\mu^{\ast}=e_{n}^{\ast}\cdots e_{2}^{\ast }e_{1}^{\ast}$ as the corresponding
\textit{ghost path}.\vskip 1ex 

A Leavitt path algebra carries a natural ${\mathbb Z}$-graded structure, where
the vertices have degree zero, the edges have degree one and the ghost edges
have degree $-1$, see  \cite[Lemma 1.7]{AA}. 
A fact that will prove useful to us is that any nonzero graded ideal must 
contain a vertex, see \cite[Corollary 2.4]{ABCR}. 
\vskip 1ex

A subgraph $F$ of a graph $E$ is called \textit{complete} in case
$s_{F}^{-1}(v)=s_{E}^{-1}(v)$, for each regular vertex $v$ in $F^{0}$. (In
other words, if $v$ emits a nonzero finite number of edges in $E$, then all
these edges must belong to $F$). If $F$ is a complete subgraph of $E$ then
$L_{K}(F)$ is a subalgebra of $L_{K}(E)$. This is proved, for example, in
\cite[Corollary 3.3]{Goodearl}. We include a short outline proof here.

\begin{lemma}\label{lemma-subgraph} 
Let $F$ be a complete subgraph of a graph $E$. Then the inclusion of the graph
$F$ in the graph $E$ induces a natural inclusion of $L_K(F)$ as a subalgebra
of $L_K(E)$.
\end{lemma}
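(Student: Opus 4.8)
The plan is to build the natural map $\phi\colon L_K(F)\to L_K(E)$ from the universal property of $L_K(F)$, observe that it is graded, and then deduce injectivity from the fact (quoted from \cite[Corollary 2.4]{ABCR}) that a nonzero graded ideal must contain a vertex.

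First I would check that sending each vertex $v\in F^0$ to the like-named vertex of $E$, each edge $e\in F^1$ to the same edge $e\in E^1$, and each ghost edge $e^{\ast}$ to the corresponding ghost edge of $E$ respects all of the defining relations $(1)$--$(4)$ of $L_K(F)$. Relations $(1)$--$(3)$ involve only the maps $s,r$ and comparisons of distinct edges, and none of these data change when we view $F^1\subseteq E^1$, so they hold automatically in $L_K(E)$. The only relation requiring attention is the CK-2 relation $(4)$: it is imposed in $L_K(F)$ exactly at the vertices $v$ that are regular \emph{in} $F$, and for such a $v$ completeness of $F$ gives $s_F^{-1}(v)=s_E^{-1}(v)$; in particular $v$ is then regular in $E$ as well, and the identity $v=\sum_{e\in s_F^{-1}(v)}ee^{\ast}$ is literally the CK-2 relation at $v$ inside $L_K(E)$. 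A vertex which is a sink or an infinite emitter in $F$ contributes no CK-2 relation in $L_K(F)$, so there is nothing to verify there, even if such a vertex happens to be regular in $E$. Hence the universal property of $L_K(F)$ produces a $K$-algebra homomorphism $\phi\colon L_K(F)\to L_K(E)$ carrying the distinguished generators of $L_K(F)$ to the like-named generators of $L_K(E)$.

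Next I would note that $\phi$ is a homomorphism of $\mathbb{Z}$-graded algebras, since it carries the degree-$0$ generators (vertices), the degree-$1$ generators (edges), and the degree-$(-1)$ generators (ghost edges) of $L_K(F)$ to elements of the same respective degrees in $L_K(E)$. Consequently $\ker\phi$ is a graded ideal of $L_K(F)$. To finish, I would argue by contradiction: if $\ker\phi\neq 0$, then by \cite[Corollary 2.4]{ABCR} it contains some vertex $v\in F^0$, whence $v=\phi(v)=0$ in $L_K(E)$; but each vertex is a nonzero idempotent of a Leavitt path algebra, a contradiction. Therefore $\phi$ is injective, and it identifies $L_K(F)$ with the subalgebra of $L_K(E)$ generated by $\{\,v,\,e,\,e^{\ast}\ :\ v\in F^0,\ e\in F^1\,\}$.

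The only genuinely delicate point in this argument is the first step: one must match up the CK-2 relations correctly, checking that "regular in $F$" together with completeness forces "regular in $E$ with the same emitted edges," so that no relation of $L_K(F)$ is violated in $L_K(E)$. This is precisely where the completeness hypothesis enters, and it is the place where a careless treatment would go wrong; everything after it (gradedness of $\phi$, and the graded-ideal argument for injectivity) is routine.
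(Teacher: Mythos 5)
Your proposal is correct and follows essentially the same route as the paper: obtain the homomorphism by checking that the relations of $L_K(F)$ hold in $L_K(E)$ (with completeness handling the CK-2 relations), note that the map is graded so its kernel is a graded ideal, and invoke the fact that a nonzero graded ideal contains a vertex to force injectivity. The paper's proof is terser, but your more careful matching of the CK-2 relations at vertices regular in $F$ versus $E$ is exactly the point the completeness hypothesis is there to address.
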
 

\begin{proof} As the relations for  $L_K(F)$ are a subset of the relations 
for $L_K(E)$, there is a natural induced ring homormorphism from $L_K(F)$ to
$L_K(E)$. The kernel of this map is a graded ideal of $L_K(F)$, as the
homomorphism respects the ${\mathbb Z}$-grading. If the kernel were nonzero
then it would contain a vertex $v$ by \cite[Corollary 2.4]{ABCR}. However, in
this case the vertex $v$ would be equal to zero in $L_K(E)$, a contradiction. 
\end{proof} 

For any vertex $v$, define $T(v)=\{w\in E^{0}:v\geq w\}$.
We say there is a \textit{bifurcation} at a vertex $v$, if $v$ emits more than
one edge. In a graph $E$, a vertex $v$ is called a \textit{line point} if
there is no bifurcation or a cycle based at any vertex in $T(v)$. Thus, if $v$
is a line point, there will be a single finite or infinite line segment $\mu$
starting at $v$ ($\mu$ could just be $v$) and any other path $\alpha$ with
$s(\alpha)=v$ will just be an initial sub-segment of $\mu$. It was shown in
\cite{AMMS} that $v$ is a line point in $E$ if and only if $L_{K}(E)v$ (and
likewise $vL_{K}(E)$) is a simple left (right) ideal and that the ideal
generated by all the line points in $E$ is the socle, ${\rm Soc}(L_{K}(E))$,
of $L_{K}(E)$. \vskip 1ex

We shall be using the following concepts and results introduced by Tomforde
\cite{T}. A \emph{breaking vertex} of a hereditary saturated subset $H$ is an
infinite emitter $w\in E^{0}\backslash H$ with the property that
$0<\#(s^{-1}(w)\cap r^{-1}(E^{0}\backslash H))<\infty$. The set of all breaking
vertices of $H$ is denoted by $B_{H}$. For any $v\in B_{H}$, we define
\begin{equation}
v^H :=v\;-\sum_{s(e)=v,r(e)\not\in H}ee^{\ast}.
\end{equation}
 Given a hereditary saturated subset $H$ and a subset $S\subseteq B_{H}$, we
 say that $(H,S)$ is an \emph{admissible pair}. To an admissible pair $(H,S)$,
 we can associate the ideal generated by $H\cup \{v^{H}:v\in S\}$. We let
 $I(H,S)$ denote this ideal. Tomforde \cite{T} showed that the graded
 ideals of $L_{K}(E)$ are precisely the ideals of the form $I(H,S)$ for some
 admissible pair $(H,S)$. Moreover, $$L_{K}(E)/I(H,S)\cong L_{K}%
 (E\backslash(H,S)).$$ Here $E\backslash(H,S)$ is the quotient graph of $E$,
 whose vertex set is given by $$(E^{0}\backslash H)\cup \{v^{\prime}:v\in
 B_{H}\backslash S\}$$ and whose edges are given by $$\{e\in E^{1} \colon
 r(e)\notin H\}\cup\{e^{\prime}:e\in E^{1},r(e)\in B_{H}\backslash S\},$$ and
 we extend $r,s$ to $(E\backslash(H,S))^{0}$ by setting $s(e^{\prime })=s(e)$
 and $r(e^{\prime})=r(e)^{\prime}$.

\section{Leavitt path algebras satisfying polynomial identities}

In this section we characterize the Leavitt path algebras 
satisfying a polynomial identity and give explicit isomorphisms in the case that we are working with a finite graph.
\vskip 1ex

Let $K$ be a field and let $E$ be an arbitrary graph.  
We show that $L_K(E)$ satisfies a polynomial identity (is PI) 
if and only if no cycle in $E$ has an exit, every path from
a vertex in $E$ ultimately ends at a sink or at a vertex on a cycle and there
is a positive integer $d$ with the property that whenever $\mu$ is a path that does not visit any vertex more than once, then $\mu$ 
necessarily has length at most $d$.
We show that these conditions are in fact equivalent to $L_K(E)$ being a
subdirect product of matrix rings of order $\leq d$ over $K$ and
$K[x,x^{-1}]$. When $E$ is a row-finite graph, $L_K(E)$ is PI if and only
if it decomposes as a direct sum of (possibly infinitely many) matrix rings
over $K$ and $K[x,x^{-1}]$ and each matrix ring in this decomposition is of
dimension at most $d$.\vskip 1ex

We are now ready to describe the Leavitt path algebras satisfying a polynomial
identity.

\begin{theorem} 
\label{General PI LPA} 
Let $K$ be a field and let $E$ be an arbitrary graph.
Then the
following are equivalent:
\begin{enumerate}
\item[(i)] the Leavitt path algebra $L_K(E)$ is a PI algebra; 

\item[(ii)] no cycle in $E$ has an exit, every path from a vertex in $E$
eventually ends at a sink or at a vertex on a cycle, and there is a fixed
integer $d$ such that the number of paths that end at any given sink or on a
cycle (but not including the cycle) is less than or equal to $d$.

\item[(iii)] there exists a fixed positive integer $d$ such that $L_K(E)$ is a
subdirect product of matrix rings over $K$ and $K[x,x^{-1}]$ having order at
most $d$.
\end{enumerate}
\end{theorem}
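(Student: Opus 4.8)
The plan is to prove the cycle of implications (iii) $\Rightarrow$ (i) $\Rightarrow$ (ii) $\Rightarrow$ (iii). The implication (iii) $\Rightarrow$ (i) is the easiest: a subdirect product of rings each of which satisfies the standard identity $s_{2d}$ (matrix rings of order $\le d$ over a commutative ring satisfy $s_{2d}$ by the Amitsur--Levitzki theorem, and $K[x,x^{-1}]$ is commutative so satisfies $s_2$, hence $s_{2d}$) embeds in the direct product, and the direct product of algebras satisfying a fixed identity again satisfies that identity; an embedded subalgebra inherits it. So $L_K(E)$ satisfies $s_{2d}$ and is PI.

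For (i) $\Rightarrow$ (ii) I would argue contrapositively on each of the three conditions in (ii). First, if some cycle $c$ in $E$ has an exit, then by a standard reduction one finds a complete subgraph $F$ of $E$ (a cycle with a single exit edge into a sink, say) whose Leavitt path algebra is, up to taking corners, the Leavitt algebra $L(1,2)$ or contains a free-algebra-like structure; by Lemma~\ref{lemma-subgraph}, $L_K(F) \hookrightarrow L_K(E)$, and since $L_K(F)$ is not PI (Leavitt algebras $L(1,n)$ with $n\ge 2$ contain free subalgebras / are purely infinite simple of infinite GK dimension), neither is $L_K(E)$ — a subalgebra of a PI algebra is PI. Second, if there is an infinite path that never reaches a sink or a cycle, then it must contain infinitely many bifurcations (otherwise it is eventually a ray, which is fine, or enters a cycle); concentrating on the subgraph spanned by such a path produces, for every $n$, a complete subgraph of $E$ of the form of an $n$-fold binary tree, whose Leavitt path algebra contains a matrix ring $M_{2^n}(K)$ as a corner $vL_K(E)v$ at the root $v$; since PI algebras satisfy a fixed $s_N$ but $M_{2^n}(K)$ for large $n$ violates it, $L_K(E)$ is not PI. Third — and this is the crux — if there is no uniform bound $d$ on the lengths of vertex-simple paths, then for every $n$ there is a vertex-simple path of length $n$; such a path, inside the corner at its source vertex, yields a copy of $M_{n+1}(K)$ (or of a matrix ring of growing size) inside $L_K(E)$, again contradicting a fixed polynomial identity. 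Throughout, the two tools are: Lemma~\ref{lemma-subgraph} to pass to complete subgraphs, and the fact that for a finite acyclic graph or a finite "line-point-like" configuration the corner $vL_K(E)v$ is a full matrix algebra of predictable size over $K$.

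For (ii) $\Rightarrow$ (iii), assume the three graph conditions. The hereditary saturated subsets and the line-point structure give a handle: because no cycle has an exit, each cycle $c$ based at $v$ generates (inside the corner $vL_K(E)v$, or in a suitable quotient) a copy of $K[x,x^{-1}]$ via $x = c$, $x^{-1} = c^*$, and the CK-2 relations force nothing else to emanate. For each sink or cycle-vertex $w$, the paths ending at $w$ without repeating vertices — of which there are at most $d$ — index a matrix unit system showing that the ideal-theoretic "piece" of $L_K(E)$ attached to $w$ is $M_{k}(K)$ or $M_{k}(K[x,x^{-1}])$ with $k \le d$. I would make this precise by considering, for each $w$, the map $L_K(E) \to M_{k}(R_w)$ (with $R_w = K$ or $K[x,x^{-1}]$) sending a path ending at $w$ to the appropriate matrix unit (times a power of the cycle variable), checking it is a well-defined $K$-algebra homomorphism using the defining relations, and then showing that the product of all these maps over all sinks and cycles is injective — equivalently, that the intersection of their kernels is zero. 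Injectivity is where I expect to spend the most effort: one shows the intersection is a graded ideal (each kernel is graded since the maps respect the $\mathbb{Z}$-grading), hence if nonzero it contains a vertex $u$ by \cite[Corollary 2.4]{ABCR}; but every vertex $u$, under the conditions of (ii), has $T(u)$ reaching some sink or cycle, so $u$ survives in the corresponding map — contradiction. This yields the subdirect product decomposition with the uniform bound $d$.

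The main obstacle, as indicated, is the construction and verification of the family of homomorphisms $L_K(E) \to M_k(R_w)$ in (ii) $\Rightarrow$ (iii) and the bookkeeping showing their joint kernel is zero; the delicate point is handling breaking vertices and infinite emitters correctly (the CK-2 relation only applies at regular vertices, so at an infinite emitter one must be careful about which edges contribute), for which I would lean on Tomforde's description of graded ideals via admissible pairs $(H,S)$ recalled in Section~\ref{section-defs}. A secondary obstacle in (i) $\Rightarrow$ (ii) is making the "$n$-fold binary tree as a complete subgraph" argument airtight when $E$'s vertices are infinite emitters — one may need to first pass to a row-finite "resolution" or restrict attention to a finitely-branching complete subgraph, but this is routine given Lemma~\ref{lemma-subgraph}.
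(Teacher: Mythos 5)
Your cycle of implications matches the paper's, and two of the three legs are essentially the paper's argument: (iii)$\Rightarrow$(i) is the same Amitsur--Levitzki observation, and your (ii)$\Rightarrow$(iii) sketch (homomorphisms onto matrix rings indexed by sinks and no-exit cycles, joint kernel a graded ideal containing no vertex, hence zero, with Tomforde's admissible pairs handling breaking vertices) is exactly the paper's construction of the prime ideals $P_v=I(H(v),B_{H(v)})$ and the identification $L/P_v\cong M_{n_v}(K)$ or $M_{m_v}(K[x,x^{-1}])$. The genuine problems are all in (i)$\Rightarrow$(ii), where several of your concrete claims are false even though the strategy (exhibit unboundedly large matrix subalgebras) could be made to work.

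First, the cycle-with-exit step: the Leavitt path algebra of a loop at $v$ with one exit edge $f$ into a sink is not $L(1,2)$, is not purely infinite simple, and contains no free subalgebra (every word in $c,c^*$ reduces via $c^*c=v$ to $c^i(c^*)^j$, so the relevant corner has quadratic growth). It is indeed non-PI, but you need an actual reason: either the matrix units $c^iff^*(c^*)^j$, or, as the paper does, the fact that in a PI ring with identity the relation $c^*c=v$ in $vLv$ forces $cc^*=v$ (Procesi), whence $f=cc^*f=c(c^*f)=0$, a contradiction. Second, the corner $vL_K(F)v$ at the root of a finite tree is \emph{commutative} (it is spanned by the idempotents $\alpha\alpha^*$, since a tree has a unique path from the root to any vertex), so it certainly does not contain $M_{2^n}(K)$; relatedly, an eventually bifurcation-free infinite path (a ray) is not ``fine'' --- it already violates (ii) and yields $M_n(K)$ for every $n$ via the matrix units $\beta_i\beta_j^*$ built from terminal segments. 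Third, and most importantly, condition (ii) bounds the \emph{number} of vertex-simple paths ending at a given sink or cycle, not their \emph{lengths}; your argument only rules out unbounded lengths and misses the configuration of many short paths converging on one vertex (e.g.\ a single sink receiving infinitely many edges $e_i$, where every simple path has length one but the matrix units $e_ie_j^*$ give $M_n(K)$ for every $n$). The paper handles both configurations uniformly: if some vertex $v$ receives at least $m^2$ vertex-simple paths, then either some such path has length $>m$ or $m$ of them share a common length; in either case one produces $m$ mutually orthogonal idempotents whose images are nonzero in a primitive quotient $L/Q$ with $v\notin Q$, and since $L/Q$ embeds in $M_s(F)$ with $s\le N/2$ by Kaplansky--Amitsur, taking $m>N$ gives the contradiction. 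You should repair (i)$\Rightarrow$(ii) along these lines before the argument can be considered complete.
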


\begin{proof}(i)$\Rightarrow$(ii). 
Suppose that $L:=L_K(E)$ satisfies a
polynomial identity of degree $N$ and suppose towards a contradiction that
there is a cycle $c$ having an exit $f$ at a vertex $v$. Then $vLv$ is a
subring of $L$ with identity $v$, which satisfies the same polynomial
identity. We then have $c^{\ast}c=v$ in $vLv$. Since $vLv$ satisfies a
polynomial identity we then must have $cc^{\ast }=v$ \cite[Chapt II,
Proposition 4.3]{Procesi}. But we now observe that $c^{\ast}f=0$ and
consequently $cc^{\ast}f=0$. But $cc^{\ast}f = vf=f$, and so we get $f=0$, a
contradiction. Hence no cycle in $E$ has an exit. \vskip 1ex

If there is no integer $d$ satisfying the desired property in (ii), then for
each positive integer $m$ there is some vertex $v$, depending on $m$, such
that there are at least $m^{2}$ paths ending at $v$ (with no repeated
vertices). Now there are two possibilities: either there is some path of
length $>m$ ending at $v$ or for some $i\leq m$ there are $m$ distinct paths
of length $i$ ending at $v$.  We pick $m>N$.\vskip 1ex

As the Jacobson radical of $L_K(E)$ is zero, see \cite[Proposition 6.3]{AA2},
there is a primitive ideal $Q$ such that $v\not\in Q$. Since the prime ring
$L_K(E)/Q$ satisfies the same polynomial identity as $L_K(E)$, we appeal to
Amitsur's theorem \cite[Chapt. II, Theorem 3.1]{Procesi} to conclude that
$L_K(E)/Q$ embeds in a matrix ring $M_{s}(F)$ for some $s\ge 1$ and some field
$F$, and where $s$ is bounded above by $N/2$.\vskip 1ex

Suppose first that we have $m$ distinct paths $\alpha_{1}, \ldots,\alpha_{m}$,
all having the same length $i\leq m$ and ending at $v$. 
Now, the elements $e_{j}:=\alpha_{j}\alpha_{j}^{\ast}$, for $j=1,\dots,m$, are
$m$ mutually orthogonal idempotents in $L$. We claim that their images are 
nonzero and 
distinct in $L/Q$. If not, suppose $e_j\in Q$ or $e_j-e_i\in Q$, for some $i\neq j$. 
Then $e_j=e_j(e_j-e_i)\in Q$, in either case. Hence, 
\[
v=v^2=(\alpha_{j}^{\ast}\alpha_{j})^2= 
\alpha_{j}^{\ast}(\alpha_{j}\alpha_{j}^{\ast})\alpha_{j}
=\alpha_{j}^{\ast}e_j\alpha_{j} \in Q,
\]
a contradiction that establishes our claim. 
However,  $s\le N/2$ and $M_s(F)$ cannot have more than
$s$ orthogonal idempotents. As $m>N>s$, we get a contradiction in this
case.\vskip 1ex

On the other hand, if we have a path $\alpha$ of length $>m$ ending at $v$,
let $\beta_{1},\ldots,\beta_{m}$ be the terminal paths of $\alpha$ such that
$\beta_{i}$ has length $i$ for each $i=1,\ldots,m$, and
$r(\beta_{i})=r(\alpha)=v$. If $f_{i}:=\beta_{i}\beta_{i}^{\ast}$, then the
images of the $f_{i}$ in $L_K(E)/Q$ form a set of $m$ nonzero orthogonal
idempotents in $L_K(E)/Q$. As before, we get a contradiction.\vskip 1ex

Thus we conclude that there is a fixed positive integer $d$ such that, given
any vertex $v$, the number of paths ending at $v$ and having no repeated
vertices is at most $d$.  Since, in addition, cycles to not have exits, this
means that the only infinite paths in $E$ are paths that are
eventually of the form $ggg\cdots$ for some cycle $g$. In other
words, every path from any vertex in $E$ eventually ends at a sink or at a
vertex on a cycle.  This proves that (i)$\Rightarrow$(ii).\vskip 1ex

(ii)$\Rightarrow$(iii). 
Assume (ii). Given any vertex $v$ in $E$, set $$H(v):=\{u\in E^{0}:u\not\geq
v\}$$ and set $$M(v):=E^{0}\backslash H(v)=\{u\in E^{0}:u\geq v\}.$$ If $v$ is
a sink or a vertex on a cycle, then $H(v)$ is a hereditary saturated set and
since $M(v)$ is downward directed, the ideal $P_{v}=I(H(v),B_{H(v)})$ is a
prime ideal not containing $v$, by \cite[Theorem 3.12]{R}. Let $\mathcal{S}$
denote the set of all such (graded) prime ideals $P_{v}$, where $v$ ranges
over all vertices that are either a sink or that lie on a cycle in $E$.
Observe that if $v$ is a sink $E$ then $v$ is still a sink in the quotient
graph $E\backslash(H(v),B_{H(v)})$ and, moreover,
$(E\backslash(H(v),B_{H(v)})^{0}=M(v)$. Hence, in $E\backslash
(H(v),B_{H(v)})$ there are no cycles, every vertex is connected to $v$ by a
path and the number of paths ending at $v$ is at most $d$. This implies that
$E\backslash(H(v),B_{H(v)})$ is a finite acyclic graph with a unique sink $v$.
Hence $L_K(E)/P_{v}\cong L_{K}(E\backslash(H(v),B_{H(v)})\cong M_{n_{v}}(K)$
with $n_{v}\leq d$ (see \cite[Lemma 3.4]{AAS1}). 
Similarly, if $v$ is a vertex on a cycle,
then, by similar argument, $E\backslash(H(v),B_{H(v)})$ is a finite graph in
which every vertex connects to $v$ and $v$ lies on a unique cycle and so, for
the corresponding $P_{v}$, $L_K(E)/P_{v}\cong
L_{K}(E\backslash(H(v),B_{H(v)})\cong M_{m_{v}}(K[x,x^{-1}])$ with $m_{v}\leq
d$ (see \cite[Proposition 3.6]{AAS1}). Now 
$$ 
{\textstyle\bigcap\limits_{P_{v}\in 
\mathcal{S}}} P_{v}=(0).
$$ 
This is immediate due to the fact that every vertex $u$ in $E$
belongs to $M(v)$ for some $v$ where $v$ is a sink or lies on a cycle, and so
for this $v$ we have $u\notin P_{v}$. It follows that the intersection of the
$P_v$ cannot contain any vertices. 
Now, any nonzero graded ideal must contain a vertex, 
by \cite[Proposition 2.4]{ABCR}; 
so the intersection of the (graded ideals) $P_v$ is $0$. 
Thus $L_K(E)$ is a subdirect product of
$\{L/P_{v}:P_{v}\in \mathcal{S}\}$, proving (iii).\vskip 1ex

(iii)$\Rightarrow$(i). This is immediate from the Amitsur-Levitzki theorem
\cite{Procesi}. 
\end{proof}

In the case that the graph $E$ is row-finite, Theorem~\ref{General PI
LPA}(iii)  can be sharpened further, leading to a  structure
theorem for PI Leavitt path algebras over row-finite graphs.

\begin{theorem}
\label{Row finite PI LPA}
Let $E$ be a row-finite graph. Then the following are
equivalent for the Leavitt path algebra $L:=L_{K}(E)$:
\begin{enumerate}
\item[(i)] $L_K(E)$ is a PI algebra;

\item[(ii)] no cycle in $E$ has an exit, every path from a vertex in $E$
eventually ends at a sink or at a vertex on a cycle, and there is a fixed
integer $d$ such that the number of paths that end at any given sink or on a
cycle (but not including the cycle) is less than or equal to $d$.

\item[(iii)] there is a fixed integer $d$ and an isomorphism $$L_K(E)\cong
{\textstyle\bigoplus\limits_{v\in\Lambda}} M_{n_{v}}(K)\oplus
{\textstyle\bigoplus\limits_{C\in\Lambda^{\prime}}} M_{m_{C}}(K[x,x^{-1}]),$$
where $\Lambda$ is the collection of sinks in $E$ and $\Lambda'$ is the
collection of cycles in $E$, and for each $v\in \Lambda$ and $C\in \Lambda'$ 
we have that $n_v$ and $m_C$ are at most $d$;
\item[(iv)] there exists a fixed positive integer $d$ such that, for each minimal
prime ideal $P$ of $L_K(E)$, $L/P$ is isomorphic to a matrix ring over $K$ or
$K[x,x^{-1}]$ of size at most $d$.
\end{enumerate}
\end{theorem}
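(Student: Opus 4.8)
The plan is to bootstrap off Theorem~\ref{General PI LPA}, so that the equivalence $(i)\Leftrightarrow(ii)$ is already in hand and the only new content is the sharper decomposition in $(iii)$ and the minimal-prime description in $(iv)$. I would organize the proof as the cycle $(ii)\Rightarrow(iii)\Rightarrow(iv)\Rightarrow(i)$, using $(i)\Leftrightarrow(ii)$ from the previous theorem to close the loop.

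For $(ii)\Rightarrow(iii)$, the idea is that condition $(ii)$ forces the graph to have a very rigid shape: since no cycle has an exit and every forward path terminates at a sink or enters a cycle, the vertex set partitions according to which sink or cycle it ``feeds into.'' Concretely, for a sink $v$ let $M(v) = \{u : u \ge v\}$ as in the proof of Theorem~\ref{General PI LPA}, and similarly for a cycle $C$ let $M(C)$ be the set of vertices connecting to $C$; because of row-finiteness together with the bound $d$ on the number of paths ending at any sink or entering any cycle, each such $M(v)$ (resp.\ $M(C)$) is \emph{finite}, and the complete subgraph on it is a finite acyclic graph with a single sink (resp.\ a finite graph with a single exit-free cycle). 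The key structural point is that these vertex sets are pairwise disjoint and exhaust $E^0$: disjointness holds because a path from $u$ cannot reach two different terminal sinks/cycles (no exits means a cycle is a ``trap''), and exhaustion is exactly the statement that every path eventually ends at a sink or a cycle. Row-finiteness guarantees that there are no infinite emitters and hence no breaking vertices, so $L_K(E)$ decomposes as the direct sum $\bigoplus_v L_K(F_v) \oplus \bigoplus_C L_K(F_C)$ over these complete subgraphs (each $L_K(F_v)$, $L_K(F_C)$ sitting inside $L$ as an ideal via Lemma~\ref{lemma-subgraph} plus the fact that the idempotent $\sum_{u \in M(v)} u$ is central). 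Finally, $L_K(F_v) \cong M_{n_v}(K)$ with $n_v \le d$ and $L_K(F_C) \cong M_{m_C}(K[x,x^{-1}])$ with $m_C \le d$ by the cited results \cite[Lemma 3.4]{AAS1} and \cite[Proposition 3.6]{AAS1}, where $n_v$ is the number of paths ending at $v$ and $m_C$ the number of paths ending on $C$ but not using the cycle edges.

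The implication $(iii)\Rightarrow(iv)$ is then essentially a general-ring-theory observation: in a direct sum of simple rings $A_\lambda$ (here matrix rings over $K$ or $K[x,x^{-1}]$, each of which is prime), the minimal prime ideals are precisely the kernels of the projections onto a single factor, i.e.\ $P_\lambda = \bigoplus_{\mu \ne \lambda} A_\mu$, so that $L/P_\lambda \cong A_\lambda$, which by $(iii)$ is a matrix ring of size at most $d$ over $K$ or $K[x,x^{-1}]$. (One should be slightly careful that $M_n(K[x,x^{-1}])$ is prime but not simple; however its minimal primes still come from the factor structure since $K[x,x^{-1}]$ is a domain, and in any case what matters is that \emph{every} minimal prime of $L$ arises as above.) Then $(iv)\Rightarrow(i)$ is immediate: a ring whose every minimal prime quotient satisfies the standard identity of degree $2d$ has zero as the intersection of its minimal primes, hence is a subdirect product of PI rings of bounded degree, so is itself PI by the Amitsur--Levitzki theorem, exactly as in the proof of $(iii)\Rightarrow(i)$ in Theorem~\ref{General PI LPA}. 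Alternatively one can simply route through $(i)\Leftrightarrow(ii)$ already established.

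The main obstacle I anticipate is the careful verification in $(ii)\Rightarrow(iii)$ that the subgraphs $F_v, F_C$ are finite and that $L$ genuinely splits as an \emph{internal} direct sum of the corresponding subalgebras --- this requires knowing that each $\sum_{u \in M(v)} u$ (or the analogous sum over $M(C)$) is a well-defined central idempotent, which in turn uses row-finiteness crucially (to rule out breaking vertices and to ensure the CK-2 relations keep one inside a single block) and uses that the $M(v), M(C)$ are simultaneously hereditary and ``co-hereditary'' in the relevant sense. Once that bookkeeping is done, invoking \cite[Lemma 3.4]{AAS1} and \cite[Proposition 3.6]{AAS1} to identify the blocks is routine, and the bound $n_v, m_C \le d$ is exactly condition $(ii)$.
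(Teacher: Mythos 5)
Your overall architecture (cycle through $(ii)\Rightarrow(iii)\Rightarrow(iv)\Rightarrow(i)$, closing the loop with $(i)\Leftrightarrow(ii)$ from Theorem~\ref{General PI LPA}) matches the paper, and your arguments for $(iii)\Rightarrow(iv)$ and $(iv)\Rightarrow(i)$ are essentially the paper's. But your proof of $(ii)\Rightarrow(iii)$ has a genuine gap: the sets $M(v)=\{u:u\geq v\}$, as $v$ ranges over sinks and cycles, are \emph{not} pairwise disjoint in general, so $E^0$ does not partition into blocks and $L$ is not an internal direct sum of subalgebras $L_K(F_v)$ on disjoint vertex sets. Your justification --- ``a path from $u$ cannot reach two different terminal sinks/cycles'' --- confuses a single path with the collection of all paths out of $u$: condition (ii) forbids exits from cycles but places no restriction on bifurcations at vertices not lying on cycles, so a single vertex may connect to several distinct sinks. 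The simplest counterexample is $v_1\leftarrow u\rightarrow v_2$ with $v_1,v_2$ sinks, which satisfies (ii) with $d=2$; here $u\in M(v_1)\cap M(v_2)$, the element $\sum_{w\in M(v_1)}w$ is not orthogonal to $\sum_{w\in M(v_2)}w$, and the subgraph on $M(v_1)$ is not even complete (it omits the edge $u\to v_2$), so Lemma~\ref{lemma-subgraph} does not apply. Indeed $L\cong M_2(K)\oplus M_2(K)$, but the two summands are the ideals generated by $v_1$ and by $v_2$, and $u=e_1e_1^*+e_2e_2^*$ is split across them rather than lying in one block. Note also that the finite graph ``$M(v)$ with a unique sink'' used in Theorem~\ref{General PI LPA} is a \emph{quotient} graph $E\backslash(H(v),B_{H(v)})$, which is why that theorem only yields a subdirect product; promoting it to a direct sum requires a different mechanism.

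The paper's route avoids this by working with two-sided ideals rather than vertex-induced subgraphs: the socle $S=\bigoplus_i S_i$ with $S_i=\bigoplus_{r=1}^{n_i}Lw_i\alpha_{i_r}^*\cong M_{n_i}(K)$ (a sum of $n_i$ isomorphic simple left ideals, one for each path $\alpha_{i_r}$ ending at the sink $w_i$), and the ideal $T=\bigoplus_j T_j$ generated by the vertices on the no-exit cycles, with $T_j\cong M_{l_j}(K[x,x^{-1}])$ by \cite[Propositions 3.6, 3.7]{AAS1}. One then shows $S\cap T=0$ and, crucially, that $S+T=L$: the hereditary saturated set $H=(S+T)\cap E^0$ contains every sink and every cycle vertex, and an induction on the length of a longest path from $u$ into $H$ --- this is where row-finiteness enters, via the CK-2 relation at a regular vertex all of whose immediate descendants already lie in $H$ --- shows $E^0\subseteq H$. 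If you replace your partition argument with this ideal-theoretic decomposition, the rest of your write-up goes through.
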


\begin{proof}
The implication (i)$\Rightarrow$(ii) follows from 
Theorem~\ref{General PI LPA}. \vskip 1ex

(ii)$\Rightarrow$(iii). Assume (ii). One can obtain (iii) from (ii) 
by following the
proof of Theorem 3.9 of \cite{AAPS}. Here we outline a 
proof. If $(w_{i}:i\in I\}$ is the set of all the sinks in $E$, 
then the socle
of $L$ is $S= {\textstyle\bigoplus\limits_{i\in I}} S_{i}$ where each ideal
$S_{i}= {\textstyle\bigoplus\limits_{r=1}^{n_i}}
Lw_{i}\alpha_{i_{r}}^{\ast}$ with $n_{i}$ being the number
of paths $\alpha_{i_{r}}$ in $E$ that end at the sink $w_{i}$, see 
\cite{AMMS}.
Now $S_{i}\cong M_{n_{i}}(K)$, as $S_{i}$ is a direct sum of $n_{i}$
isomorphic simple modules (whose endomorphism ring is the field $K$).
Likewise, if $\{c_{j}:j\in J\}$ is the set of all distinct cycles (with no
exits) in $E$ and if $T$ the ideal generated by all the vertices in these
cycles, then $T= {\textstyle\bigoplus\limits_{j\in J}} T_{j}$. Here $T_{j}$ is
the ideal generated by the vertices on a single no-exit cycle $c_{j}$ based at
a vertex $v_{j}$ and so $T_{j}=M_{l_{j} }(K[x,x^{-1}])$ where $l_{j}$
represents the number of paths that end at $v_{j}$ but do not include that
cycle $c_{j}$ \cite[Propositions 3.6, 3.7]{AAS1}. 
Consider the ideal $S+T$. Now $H=(S+T)\cap E^{0}$ is
a hereditary saturated set. Since every path from a vertex $u$ in $E$
eventually ends in a sink or at a vertex on a cycle, the row-finiteness of $E$
implies (by a simple induction on the length of a path of maximum length from
$u$ to a vertex in $H$) that $u$ belongs to the hereditary saturated set $H$.
Thus $E^{0}\subset H$ and we conclude that the ideal $S+T=L$. Also $S\cap T=0$
since if a vertex belongs to the hereditary saturated set $S\cap T$, this will
give rise to a cycle $c$ which will have an exit, a contradiction. Thus
$L=S\oplus T= {\textstyle\bigoplus\limits_{i\in\Lambda}} M_{n_{i}}(K)\oplus
{\textstyle\bigoplus\limits_{j\in\Lambda^{\prime}}} M_{l_{j}}(K[x,x^{-1}])$
with the desired properties for $n_{i},l_{j} ,\Lambda,\Lambda^{\prime}$.\vskip
1ex

(iii)$\Rightarrow$(iv). Assume (iii). Now any minimal prime ideal $P$ of $L$
is the complement of a single matrix summand in $L$, namely, $L=P\oplus Q$
where $Q\cong M_{n_{i} }(K)$ or $M_{l_{j}}(K[x,x^{-1}])$ where $n_{i},l_{j}\leq
d$. This proves (iv)$.$\vskip 1ex

(iv)$\Rightarrow$(i). 
Assume (iv). Since the Jacobson radical of $L$ is $0$, the intersection of all
minimal prime ideals $P$ of $L$ is $0$. Thus $L$ embeds in the direct product
of $L/P$ for various minimal prime ideals $P$ of $L$. By hypothesis, each
$L/P$ is a matrix ring of size $\leq d$ over $K$ or $K[x,x^{-1}]$ and so
satisfies the standard polynomial identity $S_{2d}$ by the Amitsur-Levitzki
theorem \cite{Procesi}. Then $L$ itself satisfies the polynomial identity
$S_{2d}$, thus proving (i).
\end{proof}

We give an example of a graph  $E$ that is not row-finite whose 
Leavitt path algebra $L_{K}(E)$ satisfies a polynomial identity 
but is not a direct sum
of matrix rings over $K$ and $K[x,x^{-1}]$.

\begin{example}
Consider the graph $E$ whose vertex set consists of a vertex $w$ along with a
countably infinite set of vertices $v_{1},v_2,\ldots$, and with a directed edge from $w$ to each $v_i$ 
and with a loop based at each $v_i$.   The graph $E$ is given below.%
\vskip 5mm

\begin{tikzpicture}[->,>=stealth',shorten >=1pt,auto,node distance=3cm,
 thick,
main node/.style={     circle,
        draw,
        solid,
        fill=black!100,
        inner sep=0pt,
        minimum width=4pt}]


  \node (1) {$w$};
  \node (2) [below left of=1] {$v_1$};
  \node (4) [right of=2] {$v_2$};
  \node (3) [right of =4] {$v_3$};
  \node (5) [right of =3] {$v_4$};
  
  \draw[fill=black] (5.1,-0.8) circle (.02);
  \draw[fill=black] (5.3,-0.8) circle (.02);
  \draw[fill=black] (5.5,-0.8) circle (.02);

  \draw[fill=black] (8.1,-2.1) circle (.02);
  \draw[fill=black] (8.3,-2.1) circle (.02);
  \draw[fill=black] (8.5,-2.1) circle (.02);

 \path[every node/.style={font=\sffamily\small}]

   (1) edge node [right] {} (2)
    (1) edge node [right] {} (3)
     (1) edge node [right] {} (4)
      (1) edge node [right] {} (5)
 
      (2)  edge [loop below] node {} (2)
       (3)  edge [loop below] node {} (3)
        (4)  edge [loop below] node {} (4)
         (5)  edge [loop below] node {} (5);

\end{tikzpicture}

\end{example}

Using the notation of Theorem \ref{General PI LPA}, for each $i$,
$M(v_{i})=\{w,v_{i}\}$, $$H(v_{i})=\{v_{j}:j\neq i\}$$ and $P_{v_{i}}$ is the
(prime) ideal generated by $$H(v_{i})\cup\left\{w^{H(v_{i})}\right\}$$ and
$P_{w}$ is the (maximal) ideal generated by $\{v_{1},v_2,\ldots \}$. 
For each
$i$, $L/P_{v_{i}}\cong M_{2}(K[x,x^{-1}])$ and $L/P_{w}\cong K$. 
Hence, $L$ is the
subdirect product of $K$ and infinitely many copies of $M_{2}(K[x,x^{-1}])$.
Thus $L$ is a PI algebra, but $L$ cannot decompose as a direct sum of matrix
rings over $K$ and $K[x,x^{-1}]$, since if this were the case, $w$ would lie
in a direct sum of finitely many matrix rings of finite order; since the ideal
generated by $w$ is all of $L_K(E)$, we would then necessarily have that
$L_K(E)$ embeds in a finite direct sum of matrix rings of finite order over
$K$ and $K[x,x^{-1}]$, contradicting the fact that $L_K(E)$ has infinitely
many orthogonal idempotents.

In the next section, we explore the connection between polynomial identities
and GK dimension for Leavitt path algebras.



\section{Leavitt path algebras with GK dimension $\leq1$}

We first show that if $E$ is a finite graph, then $L_{K}(E)$ is a PI algebra
if and only if the Gelfand-Kirillov dimension (for short, GK dimension) of
$L_K(E)$ is $\leq1.$  (Note that, in view of Theorem~\ref{Row finite PI LPA}, 
this can be deduced
from work in \cite{AAJZ1, AAJZ2}.) 
Examples are constructed showing that this statement is
no longer true if $E$ is an infinite graph. \vskip 1ex

Let $K$ be a field, let $A$ be a finitely generated $K$-algebra, and let $V$
be a finite-dimensional subspace of $A$ that generates $A$ as a $K$-algebra.
Then the \textit{Gelfand-Kirillov dimension} of $A$ (GK dimension, for short)
is defined by
\[
\text{GKdim}(A)=\underset{n\rightarrow\infty}{\lim\sup}\log_{n}
(V+V^2+\cdots +V^n), 
\]
where $V^i$ denotes the subspace of $A$ spanned by all products of $i$
elements from $V$. We note that GKdim($A$) is independent of the choice of
generating subspace $V$.\vskip 1ex

If $A$ is not finitely generated as a $K$-algebra, then we define the GK
dimension of $A$ by \[ \text{GKdim}(A)={\rm sup}_B ~\text{GKdim}(B), \] where
$B$ runs over all finitely generated $K$-subalgebras of $A$. For basic
properties and results about GK dimension, we refer the reader to \cite{KL}.
For Leavitt path algebras of finite graphs, we show that having
Gelfand-Kirillov dimension at most one is equivalent to satisfying a
polynomial identity. 

\begin{theorem}
\label{GK-Finite graph} Let $E$ be a finite graph and $L=L_{K}(E)$. Then the
following are equivalent:
\begin{enumerate}
\item[(i)]  $L_K(E)$ is a PI algebra;

\item[(ii)] no cycle in $E$ has an exit;

\item[(iii)] $L$ is a direct sum of finitely many matrix rings of finite order over
$K$ and $K[x,x^{-1}]$;

\item[(iv)] $L$ has GK dimension $\leq1$;

\item[(v)] $L$ is a finite module over its centre;

\item[(vi)] if $a,b\in L$ satisfy $ab=1$ then $ba=1$
\end{enumerate}
\end{theorem}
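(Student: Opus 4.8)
The plan is to prove the cycle of implications (ii) $\Rightarrow$ (iii) $\Rightarrow$ (i) $\Rightarrow$ (ii), then fold in (iv), (v), (vi) as easy consequences or equivalents. First I would observe that for a \emph{finite} graph, condition (ii) (no cycle has an exit) is exactly condition (ii) of Theorem~\ref{Row finite PI LPA} with $d$ taken to be the (necessarily finite) maximum number of paths ending at any sink or entering any cycle: finiteness of $E$ forces the existence of such a bound automatically, and it also forces every infinite path to be eventually of the form $ggg\cdots$ for a cycle $g$, so ``every path eventually ends at a sink or a vertex on a cycle'' is automatic once no cycle has an exit. Since a finite graph is row-finite, Theorem~\ref{Row finite PI LPA} then gives (ii) $\Leftrightarrow$ (i) $\Leftrightarrow$ (iii) directly — indeed (iii) here is just the row-finite decomposition of Theorem~\ref{Row finite PI LPA}(iii), where the direct sum is now \emph{finite} because a finite graph has only finitely many sinks and finitely many cycles. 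So the genuinely new content is (iv), (v), (vi).

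For (iii) $\Rightarrow$ (iv): $M_n(K)$ is finite-dimensional, hence GK dimension $0$, and $M_m(K[x,x^{-1}])$ has GK dimension $1$ since $K[x,x^{-1}]$ does; a finite direct sum has GK dimension the maximum of the summands, so GKdim $L \le 1$. For (iv) $\Rightarrow$ (ii), I would argue the contrapositive: if some cycle $c$ has an exit, I would exhibit a finitely generated subalgebra of $L$ of GK dimension $\ge 2$. The standard trick is that a cycle with an exit yields, inside $vLv$ for $v$ a vertex on $c$, elements behaving like the generators of a free algebra on two letters up to some length, or more concretely one produces infinitely many linearly independent monomials $c^{i} \alpha c^{*j}$ (with $\alpha$ running along the exit path) so that the growth function is at least quadratic; alternatively one can cite that a Leavitt path algebra with a cycle with an exit contains a copy of the free algebra $K\langle x,y\rangle$ on a corner, which has infinite GK dimension, or at the very least fails to be PI by the $c^*c = v$ versus $cc^* = v$ argument already used in the proof of Theorem~\ref{General PI LPA}. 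In fact the cleanest route is (iv) $\Rightarrow$ (i)'s contrapositive routed through the same $cc^* f = f \ne 0$ contradiction, combined with (i) $\Leftrightarrow$ (ii); but since I want (iv) in the cycle I would instead directly estimate growth. The main obstacle is making the GK-dimension lower bound estimate clean: one must choose a finite-dimensional generating subspace $V$ containing $v$, the edges of $c$, their ghosts, and the exit edge $f$ with its ghost, and then show $\dim(V + \cdots + V^n)$ grows at least quadratically — this requires exhibiting enough linearly independent elements, which one does using the normal form for Leavitt path algebra monomials ($\mu \nu^*$ with $r(\mu) = r(\nu)$, reduced), and the fact that $c$ has infinite order as a ``unit'' on the corner $vLv$ while the exit provides a second independent direction.

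Finally (i) $\Rightarrow$ (v): from (iii), the centre of $M_n(K)$ is $K$ and $M_n(K)$ is a free module of rank $n^2$ over it; the centre of $M_m(K[x,x^{-1}])$ is $K[x,x^{-1}]$ and the matrix ring is free of rank $m^2$ over it; a finite direct sum of finite modules over its centre is again a finite module over its centre (the centre of the direct sum being the direct sum of the centres). Conversely (v) $\Rightarrow$ (i) is a general fact: a ring module-finite over a commutative subring is PI (it embeds, after extending scalars appropriately, in matrices over a commutative ring, or one invokes the standard theorem that such rings are PI — see \cite{Procesi}). For (vi): (iii) $\Rightarrow$ (vi) since each of $M_n(K)$ and $M_m(K[x,x^{-1}])$ is stably finite (matrix rings over commutative Noetherian, in fact over Noetherian, rings are stably finite, so one-sided inverses are two-sided), and a finite direct sum of stably finite rings is stably finite, so in particular directly finite. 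For (vi) $\Rightarrow$ (ii), I argue the contrapositive once more: if a cycle $c$ has an exit $f$ at $v$, then in the corner $vLv$ we have $c^*c = v$ (the identity of $vLv$) but $cc^* \ne v$ (since $cc^*f = f \ne 0$ would force $cc^* = v$ to give $f = vf = cc^*f$, yet CK-1 gives $c^*f = 0$ hence $cc^*f = 0$, contradiction); passing from the corner to all of $L$, set $a = c^* + (1_L - v)$-type elements — more carefully, since $L$ may be non-unital one works inside $vLv$ where $ab = 1$, $ba \ne 1$ directly witnesses failure of (vi) (interpreting $1$ as $v$, and noting (vi) for $L$ is equivalent to directly finiteness of all corners $vLv$, or one adjoins a unit). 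This closes the loop. The one point requiring care throughout is the non-unitality of $L$ when $E^0$ is infinite — but here $E$ is finite, so $L$ is unital with $1 = \sum_{v \in E^0} v$, and these subtleties evaporate.
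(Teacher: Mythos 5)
Your overall architecture matches the paper's: the equivalence of (i)--(iii) by specializing Theorem~\ref{Row finite PI LPA} to a finite (hence row-finite) graph, (iii) $\Rightarrow$ (iv) and (iii) $\Rightarrow$ (v) by inspection of the matrix summands, (v) $\Rightarrow$ (i) by the standard fact that rings module-finite over a commutative subring are PI, and (vi) $\Rightarrow$ (ii) by exactly the paper's corner trick: with $u=1-v$ the sum of the remaining vertices, $(u+c^{\ast})(u+c)=1$ forces $(u+c)(u+c^{\ast})=1$, hence $cc^{\ast}=v$ and the exit edge $f=c(c^{\ast}f)=0$, a contradiction. The one genuine divergence is how you close the loop from (iv): the paper invokes the Small--Stafford--Warfield theorem (affine algebras of GK dimension one are PI) to get (iv) $\Rightarrow$ (i), whereas you propose (iv) $\Rightarrow$ (ii) by a direct growth estimate showing that a cycle with an exit produces at least quadratic growth. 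This is a legitimate alternative --- the paper itself carries out precisely this linear-independence argument for the set $\{c^{i}(c^{\ast})^{j}\}$ (using right multiplication by $c^{t}f$ and left multiplication by $(c^{\ast})^{m}$) in the proof of Theorem~\ref{GK-dim 1}, so your sketch can be completed, and it buys you independence from a deep external theorem at the cost of an explicit computation that you currently only flag as ``the main obstacle'' rather than execute. One aside in your proposal is wrong and should be deleted: a cycle with an exit does \emph{not} in general force a copy of the free algebra $K\langle x,y\rangle$ into a corner --- a loop with a single exit to a sink yields the algebraic Toeplitz algebra $K\langle x,y\rangle/(xy-1)$, which has quadratic growth and GK dimension $2$, hence contains no free subalgebra on two generators. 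Fortunately that remark is not load-bearing, since the quadratic lower bound from $\{c^{i}(c^{\ast})^{j}\}$ already suffices.
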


\begin{proof}
The equivalence of (i)--(iii) follows from Theorem
\ref{Row finite PI LPA} specialized to the case of a finite graph. Since ${\rm GKdim}(K)=0$ and
${\rm GKdim}(K[x,x^{-1}])=1$, we immediately have (iii)$\implies$(iv).  Similarly, (iii) implies (v). 
The fact that (iv) implies (i) follows immediately from the Small-Stafford-Warfield theorem \cite{SSW} and (v) implies (i) is immediate from 
basic facts about PI algebras, so that (i)--(v) are equivalent. \vskip 1ex

Another general fact about PI algebras, \cite[Chapter II, Proposition 4.3]{Procesi}, shows that  (i)$\Rightarrow$(vi). 
Finally, we show that  (vi)$\Rightarrow$(ii). Suppose that (vi) holds but that there is a cycle $c$ based at $v$ with an edge $e$ 
that starts at $v$ and does not lie on $c$. Note that $c^*c=v$. Set $u$ to be the sum of all vertices in the graph other than $v$, and note that $u+v=1$. 
Using the fact that the distinct vertices give orthogonal idempotents in $L$, we see that $(u+c^*)(u+c)=u+v=1$, and so $(u+c)(u+c^*)=1$, by (vi). 
Thus, $u+v=1=(u+c)(u+c^*)= u+cc^*$ and so $v=cc^*$. Hence, $e=ve=c(c^*e)=0$, a contradiction. 
\end{proof}

In general the conclusion to Theorem \ref{GK-Finite graph} need not hold for
arbitrary infinite graphs. Specifically, there exist infinite graphs $E$ for
which the GK dimension of $L_K(E)$ is either zero or one, but where $L_K(E)$
is not a PI algebra.

\begin{example} 
\label{example-non-pi}
Let $E$ be the infinite row-finite graph given below.\\[1ex]

\begin{center}
\begin{tikzpicture}[->,>=stealth',shorten >=1pt,auto,node distance=3cm,
 thick,main node/.style={     circle,
        draw,
        solid,
        fill=black!100,
        inner sep=0pt,
        minimum width=4pt}]
\tikzset{every loop/.style={min distance=10mm,in=200,out=300,looseness=11}}

  \node[main node] (1) {}
  (1) edge [loop above] node {} (1);

  \draw[fill=black] (5.9,0) circle (.02);
   \draw[fill=black] (5.7,0) circle (.02);
    \draw[fill=black] (5.5,0) circle (.02);
   
         \draw[fill=black] (0,0) circle (0.08);
           \draw[fill=black] (1,0) circle (0.08);
              \draw[fill=black] (2,0) circle (0.08);      
                \draw[fill=black] (3,0) circle (0.08);
                  \draw[fill=black] (4,0) circle (0.08);
                  
  \draw (1,0)--+(-1,0); 
   \draw (2,0)--+(-1,0); 
    \draw (3,0)--+(-1,0);       
     \draw (4,0)--+(-1,0);    
      \draw (5,0)--+(-1,0);
             

 \end{tikzpicture}
\end{center} 

\vspace{2ex} 

As $E$ is row finite, but fails to satisfy
Condition (ii) of Theorem~\ref{Row finite PI LPA}, since there is no suitable
integer $d$, we see that $L$ is not a PI algebra. \vskip 1ex

For each $n\geq 2$, let $E_n$ be the subgraph of $E$ that contains the first
$n$ vertices from the left, and all edges to the left of the $n$th vertex.
Then $E_n$ is a finite graph with one cycle and this cycle has no exit; hence,
${\rm GKdim}(E_n)= 1$, by \cite[Theorem 5]{AAJZ1}. As each $E_n$ is a complete
subgraph of $E$, each $L_{K}(E_{n})$ can be naturally viewed as a subalgebra
of $L_K(E)$. Moreover, $L_K(E)$ is the directed union of the $L_{K}(E_{n})$
and so ${\rm GKdim}(E)= 1$.

\end{example}

\begin{example}
Let $F$ be the graph obtained from $E$ in the above example by removing the
vertex with the loop and the two edges that end at this vertex. Then $F$
contains no cycles and arguments similar to those used in the preceding
example shows that the corresponding Leavitt path algebra $L_{K}(F)$ has
GK dimension $0$, but is not a PI algebra.
\end{example}

In view of the preceding examples, one would like to investigate the nature of
a Leavitt path algebra having GK dimension $0$ or $1$.\vskip 1ex

We first consider the Leavitt path algebras whose GK dimension is $0$.

\begin{theorem}
\label{GK-dim 0}
Let $E$ be an arbitrary graph. Then the following are
equivalent for the Leavitt path algebra $L:=L_K(E)$:
\begin{enumerate}

\item[(i)] $L$ has GK dimension zero; 

\item[(ii)] the graph $E$ has no cycles;

\item[(iii)] $L$ is von Neumann regular and is a directed union of subalgebras
each of which is a direct sum of finitely many matrix rings of finite order
over $K$. 
\end{enumerate} 
\end{theorem}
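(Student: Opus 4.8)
The plan is to prove the cycle of implications (ii)$\Rightarrow$(iii)$\Rightarrow$(i)$\Rightarrow$(ii), where the first two implications carry the real content and the last is essentially immediate.

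\medskip

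\textbf{Sketch of (ii)$\Rightarrow$(iii).} Assume $E$ has no cycles. The key structural input is that an acyclic graph is a directed union of its finite complete subgraphs: explicitly, for a finite set $S$ of vertices and edges, enlarge $S$ by adjoining, for every regular vertex $v$ already present, all of $s^{-1}(v)$ and their ranges; since every vertex emits only finitely many edges among those we care about at each stage --- more carefully, one works with finite subsets and closes up under the ``complete'' condition, which is possible because a regular vertex emits only finitely many edges. (For infinite emitters nothing must be added, so closure terminates.) Each such finite complete subgraph $F$ is acyclic, so by Lemma~\ref{lemma-subgraph} $L_K(F)$ embeds in $L_K(E)$, and the union of all these subalgebras is all of $L_K(E)$ since every generator $v$, $e$, $e^\ast$ lies in some finite complete subgraph. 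For a finite acyclic graph $F$, $L_K(F)$ is a direct sum of finitely many matrix rings of finite order over $K$ --- this is the finite acyclic case already cited via \cite[Lemma 3.4]{AAS1} (it also follows from Theorem~\ref{GK-Finite graph}, conditions (ii)$\Leftrightarrow$(iii), applied to an acyclic graph, where the $K[x,x^{-1}]$ summands are absent). Finally, von Neumann regularity follows: each $L_K(F)$ is semisimple Artinian hence von Neumann regular, and a directed union of von Neumann regular rings is von Neumann regular (given $a$ in the union, it lies in some $L_K(F)$, which contains a quasi-inverse for it, and that relation persists in $L_K(E)$). This gives (iii).

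\medskip

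\textbf{Sketch of (iii)$\Rightarrow$(i).} GK dimension of a directed union is the supremum of the GK dimensions of the members (this is a standard property from \cite{KL}, and also fits the definition given in the excerpt for non-finitely-generated algebras, since any finitely generated subalgebra sits inside one member of the directed system). Each member is a finite direct sum of matrix rings over $K$, which is finite-dimensional over $K$ and hence has GK dimension $0$. Therefore $\mathrm{GKdim}(L)=0$.

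\medskip

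\textbf{Sketch of (i)$\Rightarrow$(ii).} Contrapositive: if $E$ has a cycle $c$ based at a vertex $v$, then $v L v$ contains the subalgebra generated by $c$ and $c^\ast$. Since $c^\ast c = v$ is the identity of $vLv$, the element $c$ generates inside $vLv$ either a copy of $K[x,x^{-1}]$ (if $cc^\ast = v$ as well, i.e. $c$ behaves invertibly) or, if $cc^\ast \ne v$, one still has powers $c^n (c^\ast)^n$ and $c, c^2, \dots$ linearly independent; in either case the subalgebra $K[c]$ (or $K[c,c^\ast]$) is infinite-dimensional with a filtration growing linearly, so it has GK dimension at least $1$. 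By monotonicity of GK dimension under subalgebras, $\mathrm{GKdim}(L)\ge 1>0$, contradicting (i). Concretely it is cleanest to note that $c, c^2, c^3, \dots$ are $K$-linearly independent in $L$ (no cycle relation kills them, as can be seen from the $\mathbb Z$-grading: $c^n$ has degree $n|c|$, all distinct), so inside the finitely generated subalgebra generated by $V=Kv+Kc$ one has $\dim(V+\cdots+V^n)\ge n$, forcing GK dimension $\ge 1$.

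\medskip

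\textbf{Main obstacle.} The delicate point is the argument in (ii)$\Rightarrow$(iii) that an arbitrary acyclic graph --- possibly with infinitely many vertices and infinite emitters --- is genuinely a \emph{directed union} of finite complete subgraphs whose Leavitt path algebras sit compatibly inside $L_K(E)$. One must be careful that the closure process producing a finite complete subgraph from a finite seed actually terminates: adjoining the out-edges and ranges of a regular vertex adds only finitely much, and infinite emitters are simply left as infinite emitters in the subgraph (a subgraph is complete as long as it contains \emph{all} edges out of each of its \emph{regular} vertices), so one never is forced to add infinitely many edges at once, and after finitely many closure steps one stabilizes. Getting this bookkeeping right, and confirming that these finite subgraphs are directed under inclusion and exhaust $E$, is where the proof needs the most care; everything else is an application of results already available in the excerpt.
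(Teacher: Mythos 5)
Your overall architecture (the cycle (ii)$\Rightarrow$(iii)$\Rightarrow$(i)$\Rightarrow$(ii)) matches the paper's, and your arguments for (i)$\Rightarrow$(ii) (powers of a cycle are linearly independent, so $V=Kv+Kc$ grows at least linearly) and for (iii)$\Rightarrow$(i) are exactly the paper's. The problem is in (ii)$\Rightarrow$(iii), which is precisely the step you flag as delicate, and your proposed fix does not work: an arbitrary acyclic graph is \emph{not} a directed union of finite complete subgraphs, and your closure process does not terminate. Take $E^0=\{v_1,v_2,\dots\}$ with a single edge $e_i$ from $v_i$ to $v_{i+1}$. This graph is acyclic, but any complete subgraph containing $v_1$ must contain $e_1$ (since $v_1$ is regular), hence $v_2=r(e_1)$, hence $e_2$, and so on; so no finite complete subgraph contains $v_1$ at all. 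Your claim that ``after finitely many closure steps one stabilizes'' conflates ``each step adds only finitely much'' with ``only finitely many steps are needed'': each step introduces new range vertices which are themselves regular and force a further step, and along an infinite forward path this never stops. (Your parenthetical about infinite emitters also points at the wrong issue: completeness imposes nothing at an infinite emitter, but a subgraph containing only finitely many of its out-edges would impose a spurious CK-2 relation at it, so Lemma~\ref{lemma-subgraph} cannot be applied to such subgraphs either.)

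The paper sidesteps all of this by simply citing \cite[Theorem~1]{AR} for (ii)$\Rightarrow$(iii). The construction that actually works (see \cite[Proposition~2]{AR}, which this paper invokes later in the proof of Theorem~\ref{GK-dim 1}) does \emph{not} close up under out-edges: starting from a finite set $F$ of edges one builds a finite graph $E_F$ in which the problematic range vertices are left as sinks (so no CK-2 relation is imposed on them) and infinite emitters are handled separately, and one obtains a graded monomorphism $L_K(E_F)\to L_K(E)$ whose image, together with finitely many extra copies of $K$, gives the finite-dimensional subalgebra; these subalgebras are directed and exhaust $L_K(E)$. With that replacement your remaining steps (finite acyclic graphs give finite direct sums of matrix rings over $K$; a directed union of von Neumann regular rings is von Neumann regular) go through, but as written the key structural claim underlying your (ii)$\Rightarrow$(iii) is false.
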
 

\begin{proof} 
(i)$\Rightarrow$(ii). Suppose that $E$ has a cycle $c$ based at a vertex $v$.
Let $V:= Kv\oplus Kc$. As the powers of $c$ are linearly independent over $K$,
we see that $\dim(V^n)\geq n$. This forces $\text{GKdim}(L)\geq 1$, and so
(i)$\Rightarrow$(ii) follows. \vskip 1ex

(ii)$\Rightarrow$(iii). This is proved in \cite[Theorem 1]{AR}.\vskip 1ex

(iii)$\Rightarrow$(i). This follows from the defintion of the GK dimension 
of an arbitrary $K$-lagebra in terms of finitely generated subalgebras, and the 
fact that matrix rings over $K$ have GK dimension zero. 
\end{proof}

Next, we consider the Leavitt path algebras with GK dimension one.

\begin{theorem}
\label{GK-dim 1}
Let $E$ be an arbitrary graph. Then the following are
equivalent:
\begin{enumerate}
\item[(i)] $L_K(E)$ has GK dimension at most one;

\item[(ii)] no cycle in $E$ has an exit;

\item[(iii)] $L_K(E)$ is a directed union of subalgebras each
of which is a direct sum of finitely many matrix rings of finite order over
$K$ and $K[x,x^{-1}]$;
\item[(iv)] $L_K(E)$ is locally PI.
\end{enumerate}
Moreover, if $L_K(E)$ has GK dimension $\le 1$, then it has 
GK dimension zero if and only if $E$ has no cycles; otherwise, 
it has GK dimension one.
\end{theorem}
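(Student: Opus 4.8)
The plan is to prove the cycle of implications $(i)\Rightarrow(iv)\Rightarrow(ii)\Rightarrow(iii)\Rightarrow(i)$ and then to deduce the final clause. Two of these are immediate. For $(i)\Rightarrow(iv)$: if ${\rm GKdim}(L_K(E))\le 1$ then, by the definition of the GK dimension of an arbitrary algebra in terms of its finitely generated subalgebras, every finitely generated subalgebra $B$ of $L_K(E)$ has ${\rm GKdim}(B)\le 1$; such a $B$ is either finite dimensional or of GK dimension exactly one, and in the latter case the Small--Stafford--Warfield theorem \cite{SSW} shows that $B$ is PI, so $L_K(E)$ is locally PI. For $(iii)\Rightarrow(i)$: write $L_K(E)=\bigcup_\lambda B_\lambda$ as a directed union of subalgebras, each a finite direct sum of matrix rings of finite order over $K$ and $K[x,x^{-1}]$; since ${\rm GKdim}(M_n(K))=0$, ${\rm GKdim}(M_m(K[x,x^{-1}]))=1$, and the GK dimension of a finite direct sum is the maximum of those of the summands, each $B_\lambda$ has ${\rm GKdim}(B_\lambda)\le 1$, and as every finitely generated subalgebra of $L_K(E)$ lies in some $B_\lambda$ we get ${\rm GKdim}(L_K(E))=\sup_\lambda{\rm GKdim}(B_\lambda)\le 1$. (The same remark, using that each $B_\lambda$ is PI, also gives $(iii)\Rightarrow(iv)$.)

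For $(iv)\Rightarrow(ii)$ I would localise the argument from the proof of $(i)\Rightarrow(ii)$ in Theorem~\ref{General PI LPA}. Suppose, towards a contradiction, that $c=e_1\cdots e_n$ is a cycle based at a vertex $v$ having an exit $f\ne e_1$ with $s(f)=v$. Let $B$ be the subalgebra of $L_K(E)$ generated by the finitely many vertices of $c$, by $r(f)$, and by $c,c^{\ast},f,f^{\ast}$; then $B$ is finitely generated, hence PI by $(iv)$. Now $vBv$ is a corner of $B$ and so is itself PI, with identity $v$, and $c,c^{\ast}\in vBv$ with $c^{\ast}c=v$. By \cite[Chapt.~II, Proposition~4.3]{Procesi} a one-sided unit in a PI ring is a two-sided unit, so $cc^{\ast}=v$ in $vBv$, hence in $L_K(E)$. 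Then $cc^{\ast}f=vf=f$, whereas $cc^{\ast}f=0$ because $e_1^{\ast}f=0$ by the CK-1 relation; thus $f=0$, contradicting $f^{\ast}f=r(f)\ne 0$. Hence no cycle in $E$ has an exit.

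The heart of the theorem is $(ii)\Rightarrow(iii)$, and this is the step I expect to demand real care. When $E$ is row-finite it is routine: $L_K(E)$ is the directed union of the subalgebras $L_K(F)$ as $F$ ranges over those finite subgraphs of $E$ in which every vertex that is regular in $F$ emits, in $F$, exactly its set of $E$-edges; such subgraphs are cofinal because, given a finite set of edges and vertices, one adds the finitely many missing edges out of the relevant regular vertices and declares all newly reached vertices to be sinks of $F$, and the inclusion $L_K(F)\hookrightarrow L_K(E)$ then holds by the argument of Lemma~\ref{lemma-subgraph}. Since no cycle of $E$ has an exit, no cycle of any such finite $F$ has an exit, so Theorem~\ref{GK-Finite graph} presents each $L_K(F)$ as a finite direct sum of matrix rings of finite order over $K$ and $K[x,x^{-1}]$, yielding $(iii)$. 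For an arbitrary graph $E$, infinite emitters obstruct this presentation, and I would reduce to the row-finite case by desingularisation: pass to a row-finite graph $\widetilde E$ for which $L_K(E)$ is isomorphic to a full corner $pL_K(\widetilde E)p$. Since an infinite emitter of $E$ can never lie on a cycle without an exit, desingularisation does not create a cycle with an exit, so $\widetilde E$ again satisfies $(ii)$; applying the row-finite case gives $L_K(\widetilde E)=\bigcup_n B_n$ with each $B_n$ a finite direct sum of matrix rings over $K$ and $K[x,x^{-1}]$, and then $L_K(E)=pL_K(\widetilde E)p=\bigcup_n pB_np$, where each $pB_np$ equals $p'B_np'$ for a finitely supported idempotent $p'\in B_n$ and hence is again a finite direct sum of matrix rings of finite order over $K$ and $K[x,x^{-1}]$. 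The main obstacle is exactly this bookkeeping: checking cofinality of the truncated subgraphs, that they carry honest subalgebras, and that the corners $pB_np$ form a directed system with union all of $pL_K(\widetilde E)p$. Finally, for the closing statement, if ${\rm GKdim}(L_K(E))\le 1$ and $E$ has no cycles then ${\rm GKdim}(L_K(E))=0$ by Theorem~\ref{GK-dim 0}, whereas if $E$ has a cycle $c$ based at $v$ then, with $V=Kv\oplus Kc$, the linear independence of the powers of $c$ forces $\dim(V^n)\ge n$ and hence ${\rm GKdim}(L_K(E))\ge 1$, so it must equal $1$.
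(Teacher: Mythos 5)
Most of your cycle of implications is sound, and in places it improves on what one might expect: your $(i)\Rightarrow(iv)$ via the Small--Stafford--Warfield theorem \cite{SSW} is clean and correct, and your $(iv)\Rightarrow(ii)$, which localises the Procesi one-sided-inverse argument of Theorem~\ref{General PI LPA} to the finitely generated subalgebra generated by $c,c^{\ast},f,f^{\ast}$ and the relevant vertices, is a valid substitute for the paper's route (the paper instead proves $(i)\Rightarrow(ii)$ directly, by exhibiting the quadratically large linearly independent set $\{c^{i}(c^{\ast})^{j}\}$, and closes the cycle as $(ii)\Rightarrow(iii)\Rightarrow(iv)\Rightarrow(i)$). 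Your $(iii)\Rightarrow(i)$ and the final clause (via Theorem~\ref{GK-dim 0} and the $V=Kv\oplus Kc$ growth estimate) match the paper. Your row-finite treatment of $(ii)\Rightarrow(iii)$ by finite complete subgraphs and Lemma~\ref{lemma-subgraph} is also fine.

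The genuine gap is in $(ii)\Rightarrow(iii)$ for graphs that are not row-finite. The theorem is stated for \emph{arbitrary} graphs, and the paper explicitly places no restriction on the number of edges a vertex may emit. Desingularisation, as in \cite{AA2}, resolves an infinite emitter by attaching an infinite tail and distributing the emitted edges $e_{1},e_{2},\ldots$ along it; this presupposes that each infinite emitter emits only countably many edges, so your reduction simply does not apply to a vertex emitting uncountably many edges. Even in the countable case, the identification $L_K(E)\cong pL_K(\widetilde E)p$ involves a $p$ that is an infinite formal sum of vertices when $E^{0}$ is infinite, so $p$ is not an element of the algebra and the ``full corner'' must be interpreted with care (the results you want are stated in the literature for countable graphs); the subsequent claim that each $pB_{n}p$ is again a finite direct sum of matrix rings over $K$ and $K[x,x^{-1}]$ then needs the idempotent $p1_{B_n}$ to be a sum of vertex idempotents lying in $B_n$, which holds for the natural choice of $B_n$ but is exactly the bookkeeping you defer. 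The paper avoids all of this by invoking Proposition 2 of \cite{AR}, which, for an arbitrary graph and any finite set $F$ of edges, produces a finite graph $E_{F}$ and a graded monomorphism $\theta\colon L_{K}(E_{F})\to L$ with $L$ the directed union of the subalgebras $\operatorname{im}(\theta)\oplus(\text{finitely many copies of }K)$; since $E_F$ inherits the no-exit condition, Theorem~\ref{GK-Finite graph} finishes the argument with no countability hypothesis. To repair your proof you should either restrict to the row-finite (or countable) case or replace the desingularisation step with an appeal to such a direct-limit result valid for arbitrary graphs.
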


\begin{proof} 

(i)$\Rightarrow$(ii). 
Suppose that $E$ has a cycle $c$ based at a vertex $v$ and that there is an
edge $f$ with source $v$ that does not lie on $c$. We will show that the set
$S:=\{c^i (c^{\ast})^j \colon i,j\ge 0\}$ is a linearly independent set. The
implication (i)$\Rightarrow$(ii) follows immediately, as the number of pairs
$(i,j)$ with $i+j\leq n$ is quadratic in $n$; so that $\text{GKdim}(L_K(E)\geq
2$. \vskip 1ex 

We will use the following, easily checked, facts: $c^*c=v,\, vf=f,\, cv=c,\,
c^*f=0$, and note that, by convention, $c^0=(c^*)^0=v$.  \vskip 1ex 

Suppose that $S$ is not a linearly independent set and consider a nontrivial
relation \[ \sum_{j=0}^n\,a_j(c^*)^j =0 \] where each $a_j$ is in the
$K$-subalgebra generated by $c$. Let $t$ be the least integer such that
$a_t\neq 0$. Multiply the above equation on the right by $c^tf$ to obtain 
\[
a_tf + \sum_{j=t+1}^n\,a_j(c^*)^{j-t}f =0
\]
Now, each $(c^*)^{j-t}f=0$, as $j-t>0$ and $c^*f=0$. Hence, $a_tf=0$. 
Write $a_t=\sum_{i=1}^m\,k_ic^i$ with $k_m\neq 0$; so that 
\[
\sum_{i=1}^m\, k_ic^if=0.
\]

Now, $(c^*)^mc^if=(c^*)^{m-i}f=0$, for each $i<m$; so multiplying the above
equation on the left by $(c^*)^m$ gives $k_mf=0$, contradicting the fact that
$k_m\neq 0$. Thus, $S$ is a linearly independent set, and (i)$\Rightarrow$(ii)
is established.  \vskip 1ex 

(ii)$\Rightarrow$(iii). In Proposition 2 of \cite{AR}, it was shown that a Leavitt path algebra
$L$ over an arbitrary graph $E$ is a directed union of subalgebras $B$, where
each $B=im(\theta)\oplus$ (a finite direct sum of copies of $K$) and where
$\theta$ is a graded monomorphism $L_{K}(E_{F})\longrightarrow L$. Here
$E_{F}$ is a finite graph constructed from a prescribed finite set $F$ of
edges in $E$. Moreover, if no cycle in $E$ has an exit, then it is clear from
its construction that the finite graph $E_{F}$ also has the same property.
Thus given any finite set $F$ of edges in $E$, no cycle in the corresponding
finite graph $E_{F}$ has an exit in $E_{F}$. We then appeal to Theorem
\ref{GK-Finite graph} to conclude that $im(\theta)$ $\cong L_{K}(E_{F})$ 
is a direct sum of finitely many matrix rings over
$K$ and $K[x,x^{-1}]$, and hence so is the subalgebra $B$. 
\vskip 1ex

(iii)$\Rightarrow$(iv). Assume that $L_K(E)$ is a directed union of
subalgebras each of which is a direct sum of finitely many matrix rings of
finite order over $K$ and $K[x,x^{-1}]$. Then any finitely generated
subalgebra of $L_K(E)$ will be contained in a such a subalgebra and so will be
PI.  \vskip 1ex

(iv)$\Rightarrow$(i). 
This follows from the definition of the GK dimension of
an arbitrary $K$-algebra and the fact that direct sums of matrix rings over a
field $K$ have GK dimension $0$ and direct sums of matrix rings over
$K[x,x^{-1}]$ have GK dimension one. \vskip 1ex

Finally, Theorem~\ref{GK-dim 0} shows that if 
$\text{GKdim}(L_K(E))\leq 1$ then $\text{GKdim}(L_K(E))=0$ if and only 
if $E$ has no cycles. 
\end{proof}

\end{document}